\def\({\left(}
\def\){\right)}
\newcommand{\ga}{\gamma}
\newcommand\Ref{\eqref}
\newcommand{\bea}{\begin{eqnarray}}
\newcommand{\ena}{\end{eqnarray}}
\def\bel{\begin{eqnarray}}
\def\enl{\end{eqnarray}}
\newcommand{\be}{\begin{eqnarray*}}
\newcommand{\en}{\end{eqnarray*}}
\newcommand{\R}{{\mathbb R}}
\newcommand{\C}{{\mathbb C}}
\newcommand{\Z}{{\mathbb Z}}
\newcommand{\tr}{{\rm tr}}
\newcommand{\End}{\mathop{\rm End}}
\newenvironment{tenumerate}{
  \begin{enumerate}
  
  }{\end{enumerate}}
\newcommand{\bi}{\begin{tenumerate}}
\newcommand{\ei}{\end{tenumerate}}
\newcommand{\isoto}[1][]%
{{\mathop{\buildrel{\sim}\over\longrightarrow}\limits_{#1}}}
\def\[{\left[}
\def\]{\right]}
\newcommand{\la}{\lambda}
\newcommand{\al}{\alpha}
\numberwithin{equation}{section}
\newtheorem{thm}{Theorem}[section]
\newtheorem{prop}[thm]{Proposition}
\newtheorem{lem}[thm]{Lemma}
\newtheorem{cor}[thm]{Corollary}
\newcommand{\on}{\operatorname}
\def\bi{\mathbf{i}}
\newcommand{\Res}{\operatorname{Res}}
\begin{document}
\begin{title}[On Conjectures of A. Eremenko and A. Gabrielov]
{On Conjectures of A. Eremenko and A. Gabrielov}
\end{title}
\author{E. Mukhin$\>^*$ and V. Tarasov$\>^\star$}
{\let\thefootnote\relax
\footnotetext{\vskip-.8pt\noindent
$^*$\,Supported in part by NSF grant DMS-0900984\\
{}$\quad$ $^\star$\,Supported in part by NSF grant DMS-0901616}

\begin{abstract}
We study polynomials $p(x)$ satisfying a differential equation of the form $p''-h'p'+Hp=0$, 
where $h=x^3/3+ax$. We prove a conjecture of A. Eremenko and A. Gabrielov. 
\end{abstract}

\maketitle

\section{Introduction}
In this note we consider the equations of the form
\bea\label{quartic}
p''(x)-(x^2+a)p'+H(x)p=0
\ena
which have a polynomial solution. Such equations appear in the study of the elementary eigenfunctions of the Schr\"{o}dinger equation with quartic potential, see \cite{EG}. 

We consider the corresponding local system. 
The cohomology of the
system 
is two-dimensional. Our main result is the
proof of a conjecture of \cite{EG}, which describes the cohomology class 
of the polynomial $p^2(-x)$.

The problem of computing the cohomology classes is formulated in an algebraic setting, see Section \ref{wave sec}, but we use complex-analytic tools to solve it. Our main insight comes from 
the consideration of the bispectral dual equation to \Ref{quartic}, see equation \Ref{dual}.

To prove the wanted equality of two constants we interpret them as values at zero of two {\it a priori} different solutions of equation \Ref{dual} and then show that the two solutions actually
are the same comparing their asymptotics via steepest descent method.

\medskip

The paper is written as follows. In Section \ref{elem sec} we discuss the local system associated to \Ref{quartic}. We proceed to describe an explicit basis in the cohomology in Section \ref{wave sec}. In Section \ref{d sec} we exhibit polynomials which are homologoues to a constant multiple of the first basis element proving in particular Conjecture 1 from \cite{EG}. We discuss the bispectral dual equation in Section \ref{dual sec}. Section \ref{lin alg sec} is devoted to the elementary computation with the characteristic equation of the linear operator corresponding to \Ref{quartic}. Finally, we prove our main results Theorem \ref{c} and Corollary \ref{c cor} in Section \ref{c sec}.

\medskip

{\bf Acknowledgments.} We thank A. Eremenko and A. Gabrielov for explaining  
their work and conjectures. We thank A. Its for helpful discussions.

\section{Elementary remarks}\label{elem sec}
Fix $a\in\C$ and let
\be
h(x)=\frac{x^3}{3}+ax\in\C[x].
\en

Denote by the prime the operator of differentiation 
with respect to variable $x$ and define a linear map 
on rational functions of $x$:
\be
D: \C(x)\to \C(x), \qquad q(x)\mapsto q'(x)+h'(x)q(x).
\en
The map $D$ is inherited from the derivative map 
$\frac{d}{dx}:\ \C(x)e^{h(x)}\to\C(x)e^{h(x)}$.

Let $C\subset\C(x)$ be the image of $D$. 
Let 
\be
R=\{q(x)\in\C(x)\ |\ \Res q(x)e^{h(x)}=0\} 
\en
be the subspace of rational function which have no residues after multiplication by the exponential of $h(x)$. We have $C\subset R$.

For $q_1(x),q_2(x)\in\C(x)$, we write $q_1(x)\sim q_2(x)$ 
if and only if $q_1(x)-q_2(x)\in C$.

Let $\gamma_j(t)$, $j=0,1,2$, be any smooth curves in complex plane such that
\be
\lim_{t\to-\infty}\arg(\gamma_j(t))=\pi(1/3+2j/3), \qquad 
\lim_{t\to\infty}\arg(\gamma_j(t))=\pi(1+2j/3)
\en
and $\lim_{t\to\pm\infty}|\ga_j(t)|=\infty$, see Figure \ref{figure}.

\begin{figure}
\begin{tikzpicture}[baseline=0pt,scale=0.9,line width=1pt]
\draw [->,line width=1.5pt](-4,0) -- (4,0);
\draw [->,line width=1.5pt](0,-4) -- (0,4);
\draw [line width=.7pt](0,0) -- (2,3.5);
\draw [line width=.7pt](0,0) -- (2,-3.5);
\draw [line width=.7pt,dashed,rotate=150](-3.5,0) -- (3.5,0);
\draw [line width=.7pt,dashed,rotate=-150](-3.5,0) -- (3.5,0);
\draw[<-,thick] (-3.5,0.2) to [out=0,in=240] (1.8,3.5);
\draw[->,thick] (-3.5,-0.2) to [out=0,in=120] (1.8,-3.5);
\draw[->,thick] (2.1,-3.2) to [out=120,in=240] (2.1,3.2);
\node at (-3.3,.6) {$\ga_0$};
\node at (1.2,-3.3) {$\ga_1$};
\node [right] at (2,3) {$\ga_2$}; 
\node[below] at (4,0) {$\operatorname{Re}$};
\node[below] at (-.5,4) {$\operatorname{Im}$};
\node[below] at (-.2,0) {0};
\draw [line width=.5pt,dotted](0,3.5) -- (-1,3);
\draw [line width=.3pt, dotted](0,3.3) -- (-1,2.8);
\draw [line width=.5pt,dotted](0,3.1) -- (-1,2.6);
\draw [line width=.3pt, dotted](0,2.9) -- (-1,2.4);
\draw [line width=.5pt,dotted](0,2.7) -- (-1,2.2);
\draw [line width=.3pt, dotted](0,2.5) -- (-1,2.0);
\node at (-0.5,2.6) {$H_1$};
\draw [line width=.5pt,dotted,rotate=120](0,3.5) -- (-1,3);
\draw [line width=.3pt, dotted,rotate=120](0,3.3) -- (-1,2.8);
\draw [line width=.5pt,dotted,rotate=120](0,3.1) -- (-1,2.6);
\draw [line width=.3pt, dotted,rotate=120](0,2.9) -- (-1,2.4);
\draw [line width=.5pt,dotted,rotate=120](0,2.7) -- (-1,2.2);
\draw [line width=.3pt, dotted,rotate=120](0,2.5) -- (-1,2.0);
\node at (-2,-2) {$H_2$};
\draw [line width=.5pt,dotted,rotate=240](0,3.5) -- (-1,3);
\draw [line width=.3pt, dotted,rotate=240](0,3.3) -- (-1,2.8);
\draw [line width=.5pt,dotted,rotate=240](0,3.1) -- (-1,2.6);
\draw [line width=.3pt, dotted,rotate=240](0,2.9) -- (-1,2.4);
\draw [line width=.5pt,dotted,rotate=240](0,2.7) -- (-1,2.2);
\draw [line width=.3pt, dotted,rotate=240](0,2.5) -- (-1,2.0);
\node at (2.5,-1) {$H_0$};
\end{tikzpicture}
\caption{Contours $\ga_j$ and planes $H_j$.}\label{figure}
\end{figure}
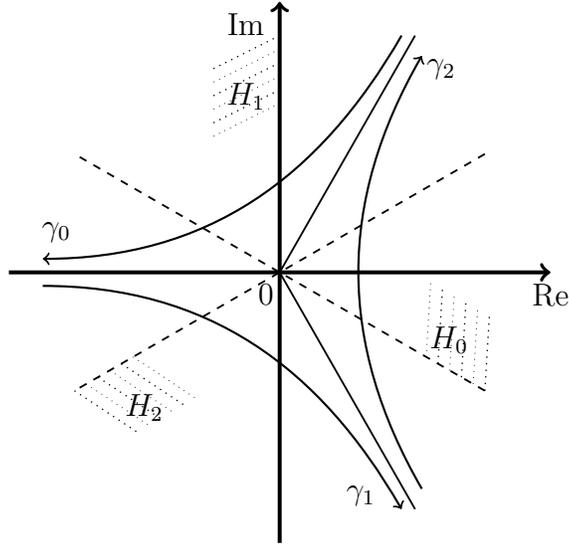

Define functionals $l_j\in R^*$, $j=0,1,2$, by the formula
\be
l_j(q(x))=\int_{\ga_j}q(x)e^{h(x)}dx.
\en
Here  we chose the contour $\ga_j$ so that it does not go through possible poles of $q(x)$.
Clearly, the functionals $l_i$ are well-defined and we have $l_1+l_2+l_3=0$.

\begin{prop}\label{dim2} 
We have $\dim R/C=2$. Moreover, for $q(x)\in R$ we have $q(x)\sim 0$ if and only 
if $l_j(q(x))=0$, $j=1,2$.
\end{prop}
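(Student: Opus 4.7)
The argument has three stages. First, verify that each $l_j$ descends to $R/C$; then reduce $R/C$ to a quotient spanned by $\{1,x\}$; finally, show that $l_1$ and $l_2$ are linearly independent on this quotient, which gives both the dimension count and the vanishing criterion simultaneously.

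For any $p\in\C(x)$, one has $D(p)\,e^h dx = d(pe^h)$. The contour $\gamma_j$ begins and ends in Stokes sectors where $\Re h \to -\infty$, so the boundary term vanishes and $l_j(D(p))=0$; the same reasoning applied around each finite pole shows $\operatorname{Res}_{x_0}(D(p)e^h)=0$, so $C\subseteq R$ and the $l_j$ are well-defined on $R/C$. Closing the three contours at infinity gives the relation $l_0+l_1+l_2=0$.

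To bound $\dim R/C$, reduce an arbitrary $q\in R$ modulo $C$. If $q$ has a pole of order $N\ge 2$ at $x_0$ with leading coefficient $c$, then
\[
D\!\Bigl(-\tfrac{c}{N-1}(x-x_0)^{-(N-1)}\Bigr) \;=\; c(x-x_0)^{-N} + \text{(terms of order $\le N-1$ at $x_0$)},
\]
so subtracting this element of $C$ from $q$ strictly lowers the pole order. Iterating, $q$ becomes equivalent to a rational function with at worst simple poles; the residue condition $q\in R$ then forces each residue $r$ to satisfy $r\,e^{h(x_0)}=0$, hence $r=0$. Thus $q$ is equivalent modulo $C$ to a polynomial. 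Using
\[
D(x^n)\;=\;x^{n+2}+a x^n+n x^{n-1}, \qquad \text{i.e.}\qquad x^{n+2}\sim -a x^n - n x^{n-1},
\]
we inductively reduce degree to obtain a polynomial of degree at most $1$. Hence $\dim R/C \le 2$ and $R/C$ is spanned by the classes of $1$ and $x$.

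For the final step, set $A_j(a)=l_j(1)=\int_{\gamma_j}e^{h(x)}\,dx$; differentiation under the integral gives $A_j'(a)=l_j(x)$, and integrating the identity $(x^2+a)e^h = \tfrac{d}{dx}e^h$ along $\gamma_j$ yields $A_j''(a)+a A_j(a)=0$. So the three functions $A_0,A_1,A_2$ all lie in the two-dimensional solution space of this Airy-type equation and satisfy $A_0+A_1+A_2=0$; in particular, any two of them are linearly independent, so their Wronskian $A_1 A_2'-A_2 A_1'$ is a nonzero constant. Consequently the matrix $\bigl[l_j(x^k)\bigr]_{j=1,2;\,k=0,1}$ is invertible, and combined with the bound above this forces $\dim R/C=2$ and $q\sim 0\iff l_1(q)=l_2(q)=0$.

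The one delicate point is the nonvanishing of the Wronskian, i.e.\ the linear independence of $A_1$ and $A_2$; I would establish it by steepest-descent asymptotics of $A_j(a)$ as $|a|\to\infty$ along rays where $A_1$ and $A_2$ exhibit distinct dominant exponentials, ruling out any linear relation. All other steps are elementary algebraic manipulations with $D$.
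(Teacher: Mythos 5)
Your proof is correct and follows essentially the same route as the paper: reduce modulo $C$ to a polynomial of degree at most one, then prove independence of $l_1,l_2$ by recognizing that $l_j(1),l_j(x)$ are the value and derivative of a solution of an Airy-type equation (you differentiate in the parameter $a$, the paper shifts $h$ by $ux$ and differentiates in $u$ --- the same functions up to translation). The one wobble is the clause deducing pairwise independence of $A_1,A_2$ from $A_0+A_1+A_2=0$, which does not follow by itself; but you flag this yourself, and the steepest-descent comparison of asymptotics you propose is precisely the standard argument (implicitly invoked by the paper when it calls the $\phi_j$ ``fundamental solutions'' of the Airy equation).
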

\begin{proof}
Let $q(x)\in R$. 
Write $q(x)$ as a sum of simple fractions. If we have a term $1/(x-z)^k$, for some $z\in\C$, then we subtract 
$D(1/(x-z)^{k-1})$ and decrease the order of the pole modulo $C$. Note, that
since $q(x)\in R$, $k=1$ is impossible. Therefore there exists a polynomial $f(x)$, such that   $q(x)\sim f(x)$. Now if $\deg f(x)=n>1$ we subtract $D(x^{n-2})$ and reduce the degree of $f(x)$ modulo $R$. It follows that $q(x)\sim ax+b$ for some choice $a,b\in\C$.

Since $|e^{h(\ga_j(t))}|\to 0$ as $t\to\pm\infty$, we have $l_j(C)=0$. Therefore to finish the proof of the proposition, it is sufficient to show that $l_1, l_2\in R^*$ are linearly independent functionals. Thus, it is sufficient to show that
$\det (\int_{\gamma_j}x^{k-1}e^{h(x)}dz)_{j,k=1,2}=0$. But this determinant is non-zero because it equals the Wronski determinant $W(\phi_1(u),\phi_2(u))$ where $\phi_j(u)=\int_{\ga_j}e^{h(x)+ux}dx$ are fundamental solutions of the Airy equation $f''(u)+(u+a)f(u)=0$.
\end{proof}
We remark that one can replace the cubic odd polynomial $h(x)$ with an arbitrary polynomial of degree $k$ and similarly
define $k$ functionals and prove a generalization of Proposition \ref{dim2} with $\dim R/C=k-1$.

\medskip 
Let $p(x)\in\C[x]$ be a polynomial of degree $n$ with simple roots only. Let 
\be
R(p(x))=R\cap \frac{\C[x]}{p^2(x)},\qquad C(p(x))= C\cap D\left(\frac{\C[x]}{p(x)}\right).
\en
Then clearly $C(p(x))\subset R(p(x))$. If $q_1(x),q_2(x)\in R(p(x))\subset R$ then clearly $q_1(x)\sim q_2(x)$ if and only if $q_1(x)-q_2(x)\in C(p(x))$.

\begin{lem}\label{pdim2} We have $\dim R(p(x))/C(p(x))=2$. Moreover, for $q(x)\in R(p(x))$ we have 
$q(x)\in R(p(x))$ if and only if $l_i(q(x))=0$, $i=1,2$.
\end{lem}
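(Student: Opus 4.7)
The plan is to compare $R(p(x))/C(p(x))$ with $R/C$ through the natural linear map
\[
\iota \colon R(p(x))/C(p(x)) \longrightarrow R/C
\]
induced by the inclusions $R(p(x)) \subset R$ and $C(p(x)) \subset C$. The map is well-defined, and the dimension statement will follow at once from Proposition \ref{dim2} once $\iota$ is shown to be an isomorphism.

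Surjectivity of $\iota$ is immediate: by Proposition \ref{dim2}, every class in $R/C$ is represented by a polynomial $b+ax$, which has no poles and therefore lies in $R\cap \C[x]/p^2(x)=R(p(x))$. The main step is injectivity. Suppose $q\in R(p(x))$ maps to zero in $R/C$, so $q=D(f)$ for some $f\in\C(x)$. Since the kernel of $D$ on differentiable functions is spanned by $e^{-h(x)}$, which is not rational, $f$ is uniquely determined by $q$, and it suffices to show $f\in\C[x]/p(x)$. Assume $f$ has a pole of order $k\geq 1$ at some $z\in\C$. Writing $f=c(x-z)^{-k}+\dots$ with $c\neq 0$, the derivative $f'=-kc(x-z)^{-k-1}+\dots$ has a pole of exact order $k+1$ at $z$, whereas $h'(x)f(x)$ has a pole of order at most $k$ because $h'$ is a polynomial and hence holomorphic at $z$. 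The leading singular terms cannot cancel, so $D(f)=f'+h'(x)f(x)$ has a pole of exact order $k+1$ at $z$. Since $q\in\C[x]/p^2(x)$, this forces $k=0$ unless $z$ is a root of $p(x)$, and $k\leq 1$ at each root of $p(x)$. Hence $f\in\C[x]/p(x)$ and $q\in C(p(x))$, which gives injectivity.

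The ``moreover'' part (with $C(p(x))$ in place of the evident typo $R(p(x))$ on the right-hand side) now follows by combining Proposition \ref{dim2} with this injectivity argument: for $q\in R(p(x))$, the vanishing $l_1(q)=l_2(q)=0$ is equivalent to $q\in C$ by Proposition \ref{dim2}, and $q\in R(p(x))\cap C$ implies $q\in C(p(x))$ by the argument above. The one point requiring care is the pole-order analysis used for injectivity; everything else reduces to bookkeeping once Proposition \ref{dim2} is in hand. The essential observation is that multiplication by the polynomial $h'(x)$ cannot neutralize the extra pole produced by differentiation, so poles of $f$ outside the roots of $p(x)$ are incompatible with $D(f)\in\C[x]/p^2(x)$, and likewise poles of order $\geq 2$ at roots of $p(x)$ are forbidden.
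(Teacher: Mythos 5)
Your proof is correct, and it takes a somewhat different route from the one the authors intend. The paper's proof is declared to be ``similar to the proof of Proposition \ref{dim2}'', meaning one repeats the reduction argument inside the constrained spaces: for $q=g(x)/p^2(x)\in R(p(x))$ one subtracts $D(c/(x-z))$ at each root $z$ of $p$ (these lie in $\C[x]/p(x)$, so the subtraction stays in $C(p(x))$) to remove the double poles, uses the residue condition to see that the remaining simple poles vanish, lowers the degree of the resulting polynomial by subtracting $D(x^{m})$ with $x^m\in\C[x]/p(x)$, and finishes with the linear independence of $l_1,l_2$ on the span of $1,x$. You instead deduce the lemma from Proposition \ref{dim2} by showing that the natural map $R(p(x))/C(p(x))\to R/C$ is an isomorphism; the only genuinely new content is injectivity, i.e., the identity $R(p(x))\cap C=C(p(x))$, which you prove by observing that $D$ raises the exact order of any pole of $f$ by one (the term $f'$ dominates $h'f$), so $D(f)\in\C[x]/p^2(x)$ forces $f\in\C[x]/p(x)$ because $p$ has simple roots. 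Both arguments rest on the same pole-shifting property of $D$, but yours isolates it cleanly, avoids redoing the partial-fraction reduction, and makes the ``moreover'' part an immediate corollary of Proposition \ref{dim2}; the paper's version has the mild advantage of exhibiting explicit representatives $ax+b$ directly inside $R(p(x))$. Your reading of the second $R(p(x))$ in the statement as a typo for $C(p(x))$ is the correct one, and I see no gaps in the argument.
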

\begin{proof}
The proof is similar to the proof of Proposition \ref{dim2}.
\end{proof}

\section{When $p(x)$ is a wave polynomial}\label{wave sec}
We start with the following lemma.
\begin{lem}\label{pol lem} Let $p(x)\in\C[x]$ be a polynomial with simple roots only.
We have $1/p^2(x)\in R$ if and only if there exists $b\in \C$ such that $p(x)$ is a solution of the equation
\begin{equation}\label{difur}
y''(x)-h'(x)y'(x)+(nx+b)y(x)=0.
\end{equation}
\end{lem}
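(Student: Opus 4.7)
The plan is to convert the condition $1/p^2(x)\in R$ into a divisibility relation on $p(x)$, and then use a degree count to extract the differential equation~\eqref{difur}.

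First I would compute the residue of $e^{h(x)}/p^2(x)$ at a simple root $z$ of $p$. From the local expansions
\[
p(x) = p'(z)(x-z) + \tfrac12 p''(z)(x-z)^2 + O((x-z)^3), \qquad e^{h(x)} = e^{h(z)}\bigl(1 + h'(z)(x-z) + O((x-z)^2)\bigr),
\]
a routine calculation (expand $(x-z)^2/p(x)^2$ to first order in $x-z$, multiply by $e^{h(x)}$, and read off the coefficient of $(x-z)^{-1}$) yields
\[
\Res_{x=z}\frac{e^{h(x)}}{p^2(x)} \,=\, \frac{e^{h(z)}}{p'(z)^3}\bigl(h'(z)\,p'(z) - p''(z)\bigr).
\]
Since the simple roots of $p$ are the only poles of $1/p^2$, we conclude that $1/p^2 \in R$ if and only if $p''(z) = h'(z)\,p'(z)$ at every root $z$ of $p$, equivalently, if and only if $p(x)$ divides $p''(x)-h'(x)\,p'(x)$ in $\C[x]$.

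Next I would compare degrees. Since $\deg h'(x) = 2$ and $\deg p'(x)=n-1$, the polynomial $p''(x)-h'(x)p'(x)$ has degree $n+1$ with leading coefficient equal to $-n$ times the leading coefficient of $p$. So if $p$ divides $p''-h'p'$, the quotient is a degree-$1$ polynomial with leading coefficient $-n$, that is, of the form $-(nx+b)$ for some $b\in\C$; this is precisely \eqref{difur}. Conversely, if $p$ satisfies \eqref{difur}, then $p\mid p''-h'p'$, so every residue computed above vanishes and $1/p^2\in R$.

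The argument is essentially bookkeeping and I expect no real obstacle. The only care required is in the residue computation, where the hypothesis that $p$ has simple roots is used (so that $1/p^2$ has only double poles, whose residues are controlled by the first-order Taylor data of $p$ and of $e^{h(x)}$); after that, the lemma follows from a two-line degree count.
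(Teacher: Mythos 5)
Your proof is correct and follows the same route as the paper: compute the residue of $e^{h(x)}/p^2(x)$ at each simple root $z$, obtaining $e^{h(z)}\bigl(h'(z)p'(z)-p''(z)\bigr)/p'(z)^3$, and deduce that membership in $R$ is equivalent to $p\mid p''-h'p'$, whence the degree count gives \eqref{difur}. The paper compresses the divisibility and degree argument into ``The lemma follows,'' so your write-up is just a more explicit version of the same proof.
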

\begin{proof} Let $z\in\C$ be such that $p(z)=0$. We compute
\be
\Res_{x=z} \frac{e^{h(x)}}{p^2(x)}=\lim_{x\to z}\left(\frac{e^{h(z)}(x-z)^2}{p^2(z)}\right)'=
e^{h(z)}\frac{h'(z)p'(z)-p''(z)}{(p'(z))^3}
\en
by applying the L'Hopital rule. The lemma follows.
\end{proof} 

We call a polynomial $p(x)$ satisfying \Ref{difur} the {\it wave polynomial of degree $n$}. It  is known that for each $n\in\Z_{\geq 1}$, $a\in\C$, there exists at least one wave polynomial of degree $n$, see also Section \ref{lin alg sec} below.  

Note that all roots of all non-zero wave polynomials are simple.

\begin{lem} Let $p(x)$ be a wave polynomial. Let $f(x)$ be the polynomial such that $f'(x)=p(x)$, $f(0)=0$. Then $f(x)/p^2(x)\in R(p(x))$ and for any $q(x)\in R(p(x))$, there exist unique $c,d\in\C$ such that $q(x)\sim (c+df(x))/p^2(x)$.
\end{lem}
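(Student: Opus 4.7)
The statement packages two claims: the membership $f/p^2\in R(p(x))$, and the fact that $\{1/p^2,\,f/p^2\}$ is a basis of $R(p(x))/C(p(x))$ (the existence and uniqueness of $c,d$ then follow at once from Lemma \ref{pdim2}, which provides $\dim R(p(x))/C(p(x))=2$).

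\emph{Step 1 (Membership).} To show $f/p^2\in R(p(x))$, I only need to verify the vanishing of $\Res_{x=z}\bigl(f(x)e^{h(x)}/p^2(x)\bigr)$ at each root $z$ of $p$. The key point is $f'(z)=p(z)=0$, which gives the Taylor expansion $f(x)=f(z)+O((x-z)^2)$ near $z$; hence
\[
\frac{f(x)}{p^2(x)}=\frac{f(z)}{p^2(x)}+(\text{holomorphic at }z).
\]
Lemma \ref{pol lem} applied to the wave polynomial $p$ says $\Res_{x=z}\bigl(e^{h(x)}/p^2(x)\bigr)=0$, so the residue of $f(x)e^{h(x)}/p^2(x)$ at $z$ equals $f(z)\cdot 0+0=0$.

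\emph{Step 2 (Linear independence).} Suppose $(\alpha+\beta f)/p^2=D(g)$ for some $g\in\mathbb{C}[x]/p(x)$. Decompose $g=s+r/p$ uniquely with $s\in\mathbb{C}[x]$ and $r\in\mathbb{C}[x]$ of degree $<n$; computing $D(g)$ and clearing $p^2$ gives the polynomial identity
\[
\alpha+\beta f(x)=p^2(x)\bigl(s'(x)+h'(x)s(x)\bigr)+r'(x)p(x)-r(x)p'(x)+h'(x)r(x)p(x).
\]
The LHS has degree $\le n+1$. On the right: if $s\ne 0$, the summand $p^2 h' s$ contributes a monomial of degree $2n+\deg s+2\ge 2n+2$ that strictly exceeds every other term (here $\deg r<n$ is essential, as it keeps the $r$-contributions of degree at most $n+\deg r+2<2n+2$), so it cannot be cancelled; hence $s=0$. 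With $s=0$, if $r\ne 0$ the term $h'rp$ contributes a monomial of degree $n+\deg r+2\ge n+2>n+1$, again impossible. Hence $s=r=0$, so $D(g)=0$ and $\alpha+\beta f=0$; since $\deg f=n+1\ge 2$ with nonzero leading coefficient and $f(0)=0$, this forces $\beta=0$ (top coefficient) and then $\alpha=0$ (constant term).

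\emph{Main obstacle.} Step 1 is a routine residue computation, so the real content is Step 2. The only scenario that could derail the degree argument would be a subtle conspiracy between the dominant monomials of $p^2 D(s)$ and those of $p^2 D(r/p)$; the restriction $\deg r<n$ built into the decomposition $g=s+r/p$ rules this out by pinning the two dominant terms in different degree slots whenever both are nonzero. That observation is the whole point of the argument.
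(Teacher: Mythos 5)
Your proof is correct and follows essentially the same route as the paper: a residue check (reducing to Lemma \ref{pol lem} via $f'(z)=p(z)=0$) for the membership $f/p^2\in R(p(x))$, and a degree count showing no nonzero element of $C(p(x))$ has the form $(\alpha+\beta f)/p^2$, which combined with Lemma \ref{pdim2} gives the basis. The paper merely states the degree bound ``if $g(x)/p^2(x)\in C$ then $\deg g(x)\ge n+2$'' without proof; your Step 2 is exactly the verification of that claim.
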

\begin{proof}
The claim that $f(x)/p^2(x)\in R$ is checked similarly to the proof of Lemma \ref{pol lem}. It follows that for any $c,d\in \C$, we have $(c+df(x))/(p(x))^2\in R$.

If $\deg p(x)=n$ then $\deg f(x)=n+1$. 
If $g(x)/p^2(x)\in C$ then $\deg g(x)\geq 
n+2$. The lemma follows from Lemma \ref{pdim2}.
\end{proof}

In what follows we study the constants $c,d$ for $q(x)$ of the form $p(-x)r(x)$, where $r(x)$ is a polynomial of degree at most $n$. In particular, we prove  Conjecture 1 and formula (18) from \cite{EG} describing the constants $c,d$ for $q(x)=p^2(-x)$.

\section{When the constant $d$ is zero}\label{d sec}
Let $p(x)$ be a wave polynomial of degree $n$.

\begin{thm}\label{d=0}
For any polynomial $r(x)$ with $\deg r(x)\leq n$, there exists $c\in \C$ such that
$r(x)p(-x)\sim c/p^2(x)$.
\end{thm}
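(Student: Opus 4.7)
The plan is to reduce the theorem to the single case $r(x) = 1$ by exploiting algebraic identities in $R/C$ coming from the wave polynomial equation, and then to handle that base case by an explicit rational construction.

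The starting identity follows from substituting $y = -x$ in the wave equation $p''(y) - h'(y)p'(y) + (ny+b)p(y) = 0$: since $h'$ is even, one finds
\[
D\bigl(p'(-x)\bigr) \;=\; -p''(-x) + h'(x) p'(-x) \;=\; (b - nx)\,p(-x) \;\in\; C,
\]
so $nx\,p(-x) \sim b\,p(-x)$ in $R/C$. Combining this with $D\bigl(x^{k-2}p(-x)\bigr) \in C$ (which yields $x^{k-2}p'(-x) \sim x^k p(-x) + a x^{k-2}p(-x) + (k-2)x^{k-3}p(-x)$) and then computing $D\bigl(x^{k-1}p'(-x)\bigr) \in C$ produces the recursion
\[
(n-k+1)\,x^k p(-x) \;\sim\; b\,x^{k-1}p(-x) + a(k-1)\,x^{k-2}p(-x) + (k-1)(k-2)\,x^{k-3}p(-x)
\]
in $R/C$. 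For $1 \le k \le n$ the coefficient $n-k+1$ is nonzero, so by induction $x^k p(-x) \sim c_k\,p(-x)$ for explicit constants $c_k$ (with $c_0=1$, $c_1 = b/n$, $c_2 = (an+b^2)/[n(n-1)]$, and so on). Extending linearly in $r$, we obtain $r(x)p(-x) \sim \lambda\,p(-x)$ in $R/C$ for every polynomial $r$ with $\deg r \le n$, which reduces the theorem to the single assertion that $p(-x) \sim c/p^2(x)$ for some scalar $c$.

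For this base case, I would look for $g(x) = H(x)/p(x)$ with $H$ a polynomial of degree at most $2n-2$ solving $D(H/p) = p(-x) - c/p^2(x)$. A direct expansion of $D(H/p) = (H'p - Hp' + h'Hp)/p^2$ shows that matching the prescribed double pole $-c/p^2$ at each root $z$ of $p$ forces $H(z) = c/p'(z)$, and the simple-pole residues cancel automatically thanks to the wave-polynomial relation $p''(z) = h'(z)p'(z)$. Writing $H = c\tilde g + pB$, with $\tilde g$ the Lagrange interpolant of degree less than $n$ satisfying $\tilde g(z) = 1/p'(z)$, the problem collapses to the purely polynomial identity
\[
p(-x) \;=\; c\,Q_0(x) + D\bigl(B(x)\bigr),
\]
where $Q_0$ is an explicit degree-one polynomial (obtained as the quotient of $h'(x)\tilde g(x)$ modulo $p(x)$) and $B$ is a polynomial of degree at most $n-2$. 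The main obstacle is that this is a system of $n+1$ coefficient equations in only $n$ unknowns (the coefficients of $B$ together with $c$), so existence of a solution rests on one automatic consistency identity. I expect this identity to coincide with the $k = n+1$ case of the recursion above, namely $b\,c_n + an\,c_{n-1} + n(n-1)\,c_{n-2} = 0$, and its verification directly from the defining equation of wave polynomials of degree $n$ — perhaps most cleanly via the bispectral dual equation introduced in Section \ref{dual sec}, for which $u = 0$ is a regular singular point with the resonance that precisely encodes this vanishing — is what I anticipate as the hardest step of the proof.
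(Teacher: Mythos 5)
Your first step — the recursion $(n-k+1)\,x^k p(-x) \sim b\,x^{k-1}p(-x) + a(k-1)\,x^{k-2}p(-x) + (k-1)(k-2)\,x^{k-3}p(-x)$ obtained from $D(p'(-x))=(b-nx)p(-x)$ and $D(x^{k-1}p'(-x))$, $D(x^{k-2}p(-x))\in C$ — is correct (I checked the computations), and it is a genuinely different, purely algebraic reduction of the theorem to the single case $r=1$; the paper instead treats all $k=0,\dots,n$ uniformly. However, the base case $p(-x)\sim c/p^2(x)$ is where the entire content of the theorem sits, and your proposal does not prove it. Since $\dim R/C=2$ and the classes of $1/p^2(x)$ and $f(x)/p^2(x)$ form a basis, the assertion $[p(-x)]\in\C\,[1/p^2]$ is one nontrivial linear condition; by Proposition \ref{dim2} it is exactly $J_2\,l_1(p(-x))-J_1\,l_2(p(-x))=0$. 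Your linear system $p(-x)=c\,Q_0(x)+D(B(x))$ with $\deg B\le n-2$ is a faithful reformulation of this: the functionals on $\C_n[x]$ annihilating $D(\C_{n-2}[x])$ are exactly the span of $l_1,l_2$, and imposing vanishing on $Q_0\sim 1/p^2$ singles out $J_2l_1-J_1l_2$ up to scale. So the ``one automatic consistency identity'' you invoke is not automatic — it \emph{is} the theorem, and you leave it unverified, explicitly deferring it as ``the hardest step.''

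Moreover, your proposed identification of that identity with the $k=n+1$ case of the recursion does not work. At $k=n+1$ the coefficient $n-k+1$ vanishes, so the recursion only yields $\lambda\,p(-x)\sim 0$ with $\lambda=b\,c_n+an\,c_{n-1}+n(n-1)\,c_{n-2}$. This dichotomy gives either $p(-x)\sim 0$ (in which case the theorem is trivially true with $c=0$) or $\lambda=0$ as a scalar identity in $a,b,n$; in the latter (generic) case it carries no information about the position of the class $[p(-x)]$ relative to $[1/p^2]$ inside the two-dimensional space $R/C$, whereas the needed condition involves the transcendental periods $J_j=l_j(1/p^2)$ and $l_j(p(-x))$. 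The paper supplies precisely this missing input analytically: it introduces the second solutions $y_j(x)$ of \Ref{difur}, derives the connection formulas $y_{j+1}=y_j-J_jp$, and shows the determinant $J_2\,l_1(x^kp(-x))-J_1\,l_2(x^kp(-x))$ vanishes by rewriting it as a sum of three integrals $\int_{\tilde\ga_{j+1}}x^k y_j e^{-h}\,dx$ each of which is killed by pushing the contour to infinity in $H_j$ using the asymptotics $y_j\sim e^{h(x)}x^{-n-2}$. Some argument of this kind (or an actual algebraic verification of the linear condition) must replace your deferred step before the proof is complete.
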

\begin{proof}
For $j=0,1,2$, consider
\bea\label{sec solution}
y_j(x)=p(x)\int_{\ga_{j,x}}\frac{e^{h(z)}}{p^2(z)} \ dz,
\ena
where the integration is taken over the contour $\ga_{j,x}(t)$, $t\in (-\infty,0]$, such that $\ga_{j,x}(0)=x$ and $\lim_{t\to-\infty}\arg \ga_{j,x}(t)=\pi(1/3+2/3j)$, $\lim_{t\to-\infty}|\ga_{j,x}(t)|=\infty$. Clearly $y_j(x)$ are holomorphic functions satisfying \Ref{difur}. Let $H_j\subset \C$ be the half-planes given by 
\be
H_j=\{z\in\C \ | \ \pi(-1/6+2j/3)<\arg z<\pi(5/6+2j/3)\}, 
\en
see Figure \ref{figure}.
We have the following asymptotics:
\be
y_j(x)=\frac{e^{h(x)}}{x^{n+2}}(1+o(1)), \qquad x\to \infty, \ x \in H_j.
\en
It implies the following connection formulas
\begin{align*}
y_{j+1}(x)&=y_{j}(x)-J_{j}p(x), \qquad j=0,1,2,
\end{align*}
where  
\be
J_j=l_j\left(\frac{1}{p^2(x)}\right)=\int_{\ga_j}\frac{e^{h(x)}}{p^2(x)}\ dx\in \C, \qquad j=0,1,2,
\en
and $y_3(x)=y_0(x)$, $J_3=J_0$.

By Proposition \ref{dim2} and Lemma \ref{pdim2}, it is sufficient to prove 
\be
\left| \begin{matrix} J_1 & J_2 \\  \int_{\ga_1} x^kp(-x)e^{h(x)}\ dx &  \int_{\ga_2} x^kp(-x)e^{h(x)}\ dx \end{matrix}\right| =0, \qquad k=0,1,\dots, n. 
\en
Make the change of variables $x\to -x$ in the integrals and using the connection formulas we obtain that the determinant up to a sign is equal to
\begin{align*}
J_2 \int_{\tilde\ga_1} x^kp(x)e^{-h(x)}\ dx-J_1 \int_{\tilde\ga_2} x^kp(x)e^{-h(x)}\ dx \\
= \int_{\tilde\ga_1}x^k(y_0-y_2)e^{-h(x)}\ dx-\int_{\tilde\ga_2}x^k(y_2-y_1)e^{-h(x)}\ dx \\
=\int_{\tilde\ga_1}x^ky_0e^{-h(x)}\ dx+\int_{\tilde\ga_2}x^ky_1e^{-h(x)}\ dx+\int_{\tilde\ga_0}x^ky_2e^{-h(x)}\ dx.
\end{align*}
Here $\tilde \ga_j$ are the contours given by $\tilde\ga_j(t)=-\ga_j(t)$ for all $t\in\R$.
Note that $\tilde\ga_{j+1}\subset H_{j}$, $j=0,1,2$, and therefore the contour of integration in each of the last three integrals can be sent to infinity inside of $H_j$. It follows that each of the three integrals is zero due to the asymptotics of $y_j(x)$.
\end{proof}

\section{Bispectral dual equation}\label{dual sec}
Motivated by \cite{MTV} we consider the bispectral dual equation to \Ref{difur}:
\bea\label{dual}
u\ddot{g}(u)-n\dot{g}(u)-(u^2-au+b)g(u)=0,
\ena
where the dot denotes the derivative with respect to the variable $u$.

Equation \Ref{dual} is obtained from \Ref{difur} by formal 
replacing the operator of multiplication by $x$ with the operator $d/du$ and the operator $d/dx$ with operator of multiplication by $u$ and placing all derivatives $d/du$ to the right of the operators of multiplication by $u$.

The solution sets of bispectral dual operators are often related by suitable transforms. 
We describe such transforms for the case of bispectral
dual operators \Ref{difur} and \Ref{dual}.

\begin{lem}\label{Fourier1} Let $p(x)$ be a polynomial solution of \Ref{difur}. Then for $j=0,1,2$, the integral
\be
g_j^{[1]}(u)=\int_{\gamma_j}p(-x)e^{h(x)-ux}dx
\en
is well-defined and $g_j^{[1]}(u)$ is a holomorphic solution of \Ref{dual}. 
\end{lem}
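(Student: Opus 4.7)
The plan is to check well-definedness and holomorphy of $g_j^{[1]}$ by a direct asymptotic estimate, then to verify the differential equation \eqref{dual} by applying the operator $L_u = u\partial_u^2 - n\partial_u - (u^2 - au + b)$ under the integral sign and reducing the resulting integrand, via integration by parts in $x$, to an expression that vanishes because of the equation satisfied by $p(-x)$.

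For the first step, along each $\gamma_j$ the choice of asymptotic directions forces $\operatorname{Re}(h(x)) \to -\infty$ like $-|x|^3/3$, which dominates the factor $|e^{-ux}|$ for $u$ in any compact set. Hence the integrand decays like $e^{-c|x|^3}$ on the tails, uniformly in $u$ on compacts, so $g_j^{[1]}(u)$ is absolutely convergent and entire in $u$ and may be differentiated under the integral sign.

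For the ODE, I compute $\dot g_j^{[1]}(u) = -\int_{\gamma_j} x\,p(-x)\,e^{h(x)-ux}\,dx$ and $\ddot g_j^{[1]}(u) = \int_{\gamma_j} x^2\,p(-x)\,e^{h(x)-ux}\,dx$, so that
\[
L_u g_j^{[1]}(u) = \int_{\gamma_j} p(-x)\,\bigl[u x^2 + n x - u^2 + a u - b\bigr]\,e^{h(x)-ux}\,dx.
\]
The key identity $u\,e^{h(x)-ux} = h'(x)\,e^{h(x)-ux} - \partial_x e^{h(x)-ux}$ is applied to each power of $u$ appearing in the bracket; every resulting $\partial_x$ is then moved onto the polynomial factor by integration by parts, boundary terms vanishing because of the super-exponential decay of $e^{h(x)-ux}$ at the ends of $\gamma_j$. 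Using $h'(x)=x^2+a$ and $h''(x)=2x$, the algebraic cancellations (notably $x^2 h'(x) - (h'(x))^2 + a\,h'(x) = 0$) collapse the integrand to
\[
-\bigl[\tilde p''(x) + h'(x)\tilde p'(x) + (b - n x)\tilde p(x)\bigr] e^{h(x)-ux},
\]
where $\tilde p(x) := p(-x)$.

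To conclude, I would substitute $y \mapsto -x$ in equation \eqref{difur} satisfied by $p$; since $h'$ is even, this rewrites to $\tilde p'' + h'\tilde p' + (b-nx)\tilde p = 0$, so the bracket vanishes pointwise and $L_u g_j^{[1]} = 0$. The only real obstacle is bookkeeping in the two integration-by-parts steps, but the cancellation that makes the coefficients of $\tilde p$, $\tilde p'$, $\tilde p''$ line up is exactly the algebraic shadow of the bispectral symmetry described in the paragraph preceding the lemma, so it is essentially forced to work; the evenness of $h'$ (equivalently, the oddness of $h$) is what ensures the transformed equation is genuinely the one satisfied by $\tilde p$.
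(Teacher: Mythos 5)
Your argument is correct and follows essentially the same route as the paper: justify convergence and differentiation under the integral via the cubic decay of $\operatorname{Re}h$ along $\gamma_j$, then integrate by parts twice (trading each factor of $u$ for $h'(x)-\partial_x$) and invoke the equation \Ref{difur} at $-x$, using that $h'$ is even. Your stated final integrand $-\bigl[\tilde p''+h'\tilde p'+(b-nx)\tilde p\bigr]e^{h-ux}$ checks out, so the remaining bookkeeping is routine and the proof is complete.
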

\begin{proof} The integral is well-defined since $e^{h(x)}$ is decaying along $\ga_j$. We twice use the integration by parts to compute
\begin{align*}
u\ddot{g}_j^{[1]}-n\dot{g}_j^{[1]}-(u^2-au+b)g_j^{[1]}=\int_{\ga_j}u\left(e^{h-ux}\right)' 
p(-x)-(nx-b) p(-x)dx\\
=\int_{\ga_j}-(p(-x))'e^h\left(e^{-ux}\right)'-(nx-b)p(-x)dx=
\int_{\ga_j}(p(-x)e^{h})'-(nx-b)p(-x)dx=0.
\end{align*}
\end{proof}

\begin{lem}\label{Fourier2} Let $y_j(x)$ be a solution of \Ref{difur} given by \Ref{sec solution}. Then the integral
\be
g_j^{[2]}(u)=u^{n+1}\int_{\gamma_j}y_j(x)e^{-ux}dx
\en
is convergent for $u\in\C$ such that $\on{Re}(ue^{i\pi(1+2j/3)})<0$ and 
$g(u)$ is a solution of \Ref{dual}. 
\end{lem}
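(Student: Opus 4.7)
The approach mirrors Lemma \ref{Fourier1}: first establish convergence and holomorphy of $g_j^{[2]}(u)$, then derive the dual ODE by integration by parts. The novelty is that $y_j(x)$ does not carry a Gaussian-type factor at \emph{both} ends of $\gamma_j$, so the second end is where the half-plane condition on $u$ enters.

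\emph{Convergence.} The contour $\gamma_j$ has asymptotic directions $\pi(1/3+2j/3)\in H_j$ (at $t\to-\infty$) and $\pi(1+2j/3)\in H_{j+1}$ (at $t\to+\infty$). At the first end, the defining asymptotic $y_j(x)\sim e^{h(x)}/x^{n+2}$ gives super-exponential decay, dominating $e^{-ux}$ for every $u$. At the second end, the connection formula $y_j=y_{j+1}+J_jp$ together with the asymptotic $y_{j+1}(x)\sim e^{h(x)}/x^{n+2}$ valid throughout $H_{j+1}$ forces $y_j(x)\sim J_jp(x)$, a polynomial of degree $n$. At $x=re^{i\pi(1+2j/3)}$ the factor $|e^{-ux}|=\exp(-r\on{Re}(ue^{i\pi(1+2j/3)}))$ then controls integrability, yielding the stated half-plane for $u$. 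Uniform convergence on compact subsets gives holomorphy of $g(u):=\int_{\gamma_j}y_j(x)e^{-ux}\,dx$, and hence of $g_j^{[2]}=u^{n+1}g$.

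\emph{Derivation of the ODE.} Apply $\int_{\gamma_j}(\,\cdot\,)e^{-ux}\,dx$ to \eqref{difur}. Integration by parts turns $\partial_x$ into multiplication by $u$ modulo boundary terms, and differentiation under the integral sign (justified by the uniform convergence above) turns multiplication by $x$ into $-\partial_u$. A short computation gives
\begin{equation*}
u\,\ddot g(u)+(n+2)\,\dot g(u)-(u^2-au+b)\,g(u)=\bigl[\,(y_j'+uy_j-(x^2+a)y_j)\,e^{-ux}\,\bigr]_{\partial\gamma_j}.
\end{equation*}
The rescaling $g_j^{[2]}=u^{n+1}g$ is one-line algebra that converts the coefficient $n+2$ of $\dot g$ into $-n$ (the other terms merely pick up the overall factor $u^{n+1}$), transforming the left-hand side into \eqref{dual}.

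\emph{Boundary terms and main obstacle.} The terms in the bracket are of the form $x^k y_j^{(\ell)}(x)e^{-ux}$ with $k+\ell\le 2$, evaluated at the two ends of $\gamma_j$. At the first end they vanish by the super-exponential decay of $e^{h(x)}/x^{n+2}$ and its derivatives. At the second end they vanish because the same half-plane condition on $u$ that yields convergence also guarantees $|x|^N e^{-ux}\to 0$ for every fixed $N$. The main technical point is making the asymptotic $y_j(x)\sim J_jp(x)$ uniform along the tail of $\gamma_j$ in $H_{j+1}$; this is achieved by arranging $\gamma_j$ to enter $H_{j+1}$ along the central decaying ray, so that the correction $y_j-J_jp=y_{j+1}$ is super-exponentially small uniformly on the tail. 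All remaining steps are dominated convergence and elementary algebra.
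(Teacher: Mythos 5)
Your proof follows the paper's argument essentially verbatim: convergence comes from the super-exponential decay of $y_j$ at the $t\to-\infty$ end of $\gamma_j$ and from the decay of $e^{-ux}$ at the other end, followed by two integrations by parts to produce the dual equation (working with $g$ and then rescaling by $u^{n+1}$ is the same computation the paper does directly for $g_j^{[2]}$). You supply more detail than the paper --- the connection formula controlling the growth of $y_j$ at the second end and the explicit boundary terms --- but the route is the same.
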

\begin{proof}
The integral converges as $t\to -\infty$ on $\ga_j(t)$ since $y_j(x)$ 
is decaying and for $t\to\infty$ since $e^{-ux}$ is decaying.

Similarly to the proof of Lemma \ref{Fourier1}, we use twice the integration by parts and obtain
\begin{align*}
\frac{1}{u^{n+1}}(u\ddot{g}_j^{[2]}-n\dot{g}_j^{[2]}-(u^2-au+\la)g_j^{[2]})=\int_{\ga_j}(uh'-(n+2)x-u^2-\la)y_je^{-ux}dx
\\ =\int_{\ga_j}-(-y_j'+h'y_j)\left(e^{-ux}\right)'-((n+2)x+\la)y_j dx=0.
\end{align*}
\end{proof}

Using integration by parts the function $g^{[2]}_j(u)$ can be rewritten as follows:
\bea\label{rewrite}
{}\\
g_j(u)=u^{n+1}\int_{\ga_j}p(x)e^{-ux}\left(\int_{\infty_j}^x\frac{e^{h(z)}}{p^2(z)}dz\right) dx
=\int_{\ga_j}\sum_{r=0}^nu^{n-r}p^{(r)}(x)\ \frac{e^{h(x)-ux}}{p^2(x)}dx,\notag
\ena
where $p^{(r)}(x)$ denotes the $r$-th derivative of $p(x)$. 
In particular, the integral on the right hand side of \Ref{rewrite} converges 
for all values of $u\in\C$ and the function $g_j^{[2]}(u)$ is holomorphic in $\C$.

\begin{prop}\label{g=g}
For $j=0,1,2$, we  have $g_j^{[1]}(u)=(-1)^ng_j^{[2]}(u)$.
\end{prop}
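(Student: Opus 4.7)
The plan is to identify both $g_j^{[1]}$ and $g_j^{[2]}$ as the same solution of the second order ODE \eqref{dual} by using the steepest descent method to compare their leading asymptotics as $u\to\infty$.

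By Lemmas \ref{Fourier1} and \ref{Fourier2}, both $g_j^{[1]}$ and $g_j^{[2]}$ are holomorphic solutions of \eqref{dual}. A WKB analysis of \eqref{dual} at the irregular singular point $u=\infty$ produces two formal solutions of the form $u^{\alpha_\pm}\exp\bigl(\pm\tfrac23(u-a)^{3/2}\bigr)$, so along any generic ray $u\to\infty$ one solution is exponentially dominant over the other and the space of subdominant solutions is one-dimensional. It therefore suffices to pick a ray along which both $g_j^{[1]}$ and $(-1)^n g_j^{[2]}$ are subdominant and to show that their leading asymptotic coefficients coincide.

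To this end, first use \eqref{rewrite} to rewrite $g_j^{[2]}$ as an integral with the same exponential phase $\varphi(x,u)=h(x)-ux$ as $g_j^{[1]}$:
\[
g_j^{[1]}(u)=\int_{\gamma_j}p(-x)\,e^{\varphi(x,u)}\,dx, \qquad g_j^{[2]}(u)=\int_{\gamma_j}A(x,u)\,e^{\varphi(x,u)}\,dx,
\]
with $A(x,u)=\sum_{r=0}^n u^{n-r}p^{(r)}(x)/p^2(x)$. The $x$-saddles of $\varphi$ are the roots of $h'(x)=u$, namely $x_\pm(u)=\pm\sqrt{u-a}$, with critical values $\varphi(x_\pm(u),u)=\mp\tfrac23(u-a)^{3/2}$. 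For a suitable direction of $u\to\infty$, the contour $\gamma_j$ is homotopic, inside the domain of convergence of the integrand, to a steepest descent contour passing through the single saddle $x_+(u)$. The saddle point formula applied with the same phase and the same contour in both integrals then yields
\[
g_j^{[1]}(u)\sim p(-x_+(u))\,\Phi_j(u), \qquad g_j^{[2]}(u)\sim A(x_+(u),u)\,\Phi_j(u),
\]
with a common prefactor $\Phi_j(u)$ determined only by the phase and the contour orientation. Taking $p$ monic of degree $n$ (the identity is not invariant under rescaling $p$, which fixes the normalization), one has $p(-x_+(u))\sim(-1)^n x_+^n$ as $|u|\to\infty$, while only the $r=0$ term of $A(x_+,u)$ contributes to leading order: $A(x_+,u)\sim u^n/p(x_+)\sim u^n/x_+^n=x_+^n$, using $u\sim x_+^2$. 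Hence the amplitudes differ exactly by the factor $(-1)^n$, and the one-dimensionality of the subdominant solution space gives $g_j^{[1]}(u)=(-1)^n g_j^{[2]}(u)$.

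The main obstacle is the contour-deformation step: one must pick a direction of $u\to\infty$ avoiding the Stokes directions for \eqref{dual} and verify that the contour $\gamma_j$, constrained by its fixed asymptotic directions $\pi(1/3+2j/3)$ and $\pi(1+2j/3)$ at infinity, can be deformed to a steepest descent contour passing through $x_+(u)$ only, and not $x_-(u)$ which would pick up the dominant solution. This requires tracking the Stokes geometry of $\varphi(x,u)$ in the $x$-plane as $u$ rotates and matching it with the allowed endpoint sectors of $\gamma_j$.
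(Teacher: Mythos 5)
Your argument is correct and is essentially the paper's own: both functions are recessive solutions of \eqref{dual} in a common sector, and the steepest descent evaluation of the two $\gamma_j$-integrals (same phase $h(x)-ux$, recessive saddle $x_+=\sqrt{u-a}$, amplitude ratio $p(-x_+)/A(x_+,u)\to(-1)^n$) pins down the proportionality constant, the paper delegating the contour-deformation/Stokes bookkeeping you flag at the end to the standard Airy-function computation in its reference \cite{Ai}. Your remark that the identity is not invariant under rescaling $p$ and hence presupposes the monic normalization is a point the paper leaves implicit (it is used later only for monic $p$), and is worth keeping.
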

\begin{proof}
We compute the asymptotics of $g_j^{(i)}$ using the steepest descend method 
similarly to the computation of asymptotics of the Airy functions, see \cite{Ai}. We obtain for $j=0,1,2$, 
\be
g_j^{[1]}(u)=i(-1)^{n+j'}\sqrt{\pi} u^{n/2-1/4}e^{-\frac{2}{3} u^{3/2}+au^{1/2}}(1+o(1))
\en
as $|u|\to \infty$, with $\arg u$ fixed such that
\begin{align*}
\pi/3< &\arg u<7\pi/3 \quad &(j=0), \\
-7\pi/3< &\arg u<-\pi/3 \quad &(j=1), \\
-\pi< &\arg u<\pi \quad &(j=2).
\end{align*}
Here $j'=0$ for $j=2$ and $j'=1$ for $j=0,1$.

Similarly, using \Ref{rewrite}, we conclude that the function $g_j^{[2]}(u)$ 
has the asymptotics different from that of $g_j^{[1]}(u)$ only by a factor of $(-1)^n$ 
and since there is a unique solution of $\Ref{dual}$ with such asymptotics,
the proposition follows. 
\end{proof}

\begin{lem}\label{Fourier-1}
Let $g(u)$ be a solution of \Ref{dual} holomorphic at $u=0$. Then 
\be
p(x)=\Res_{u=0}\frac{g(u)e^{ux}}{u^{n+1}}
\en
is a polynomial solution of \Ref{difur}.
\end{lem}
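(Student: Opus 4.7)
The plan is to verify two things: (a) the residue defines a polynomial in $x$ of degree at most $n$, which is essentially automatic, and (b) the resulting polynomial satisfies \Ref{difur}. For (a), expand $g(u)=\sum_{k\ge 0}g_k u^k$ near $u=0$; then $p(x)=\sum_{k=0}^{n}g_k\,x^{n-k}/(n-k)!$, visibly a polynomial of degree at most $n$.

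For (b), denote $L=\partial_x^2-(x^2+a)\partial_x+(nx+b)$ and aim to show $Lp=0$. Since $L$ acts only in $x$, it passes under $\Res_{u=0}$:
\begin{equation*}
Lp(x)=\Res_{u=0}\frac{g(u)}{u^{n+1}}\,L_x\!\left(e^{ux}\right),
\end{equation*}
and a direct computation gives $L_x\!\left(e^{ux}\right)=\bigl((u^2-au+b)-u\,x^2+n\,x\bigr)e^{ux}$. The key step is to convert multiplication by $x$ acting on $e^{ux}$ into a $u$-derivative: $xe^{ux}=\partial_u e^{ux}$ and $x^2e^{ux}=\partial_u^2 e^{ux}$, thereby shifting all $x$-dependence (beyond the factor $e^{ux}$) into $u$-derivatives.

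Next I would move the $u$-derivatives off $e^{ux}$ by integrating by parts along a small loop around $u=0$; no boundary terms appear on a closed contour, so
\begin{equation*}
Lp(x)=\Res_{u=0}\left[\frac{(u^2-au+b)\,g(u)}{u^{n+1}}-\partial_u^2\!\left(\frac{g(u)}{u^n}\right)-n\,\partial_u\!\left(\frac{g(u)}{u^{n+1}}\right)\right]e^{ux}.
\end{equation*}
A short Leibniz computation collapses the last two bracketed terms to $\bigl(u\ddot g(u)-n\dot g(u)\bigr)/u^{n+1}$, and the dual equation \Ref{dual} replaces this by $(u^2-au+b)g(u)/u^{n+1}$, which cancels the first term exactly. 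Hence $Lp\equiv 0$.

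The main obstacle is purely bookkeeping: tracking signs and index shifts through the integration by parts and the Leibniz expansion. Conceptually this lemma is the mirror of Lemmas \ref{Fourier1}--\ref{Fourier2}: those lemmas send polynomial solutions of \Ref{difur} to solutions of \Ref{dual} via a Fourier/Laplace type integral, whereas here the inverse transform is realized by the formal Laurent residue, and the integration by parts in $u$ is the mechanism converting multiplication by $u$ and $u^{-1}$ on $g$ into the differential action of $x$ and its inverse on $p$.
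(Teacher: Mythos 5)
Your argument is correct and is essentially the paper's own proof: both apply the operator $L$ under the residue, trade multiplication by $x$ for $\partial_u$ acting on $e^{ux}$, integrate by parts twice around the closed contour (no boundary terms), and invoke the dual equation \Ref{dual}. One sign slip in the prose: the Leibniz computation collapses the two derivative terms to $(n\dot g-u\ddot g)/u^{n+1}$, not $(u\ddot g-n\dot g)/u^{n+1}$, and it is this expression that equals $-(u^2-au+b)g/u^{n+1}$ by \Ref{dual} and hence cancels the first term -- your displayed formula is right, only the sentence after it has the sign reversed (as stated it would give twice the first term rather than zero).
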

\begin{proof}
We again use twice the integration by parts:
\begin{align*}
2\pi i\ (p''-h'p'+(nx+b)p)=\int_{|u|=\epsilon} \frac{(u^2-au -x^2u+nx+b)ge^{ux}}{u^{n+1}}\ du \\
=\int_{|u|=\epsilon} \frac{((u^2-au+b)g +n\dot{g}-u\ddot{g})e^{ux}}{u^{n+1}}du=0.
\end{align*}
\end{proof}

\section{Some linear algebra}\label{lin alg sec}
Let $V=\C^{n+1}$ be the vector space with a scalar product. We fix an orthonormal basis $\{e_0,\dots,e_n\}$ in $V$. For $v\in V$ we denote $v_i=v_i\cdot e_i$ the coordinates of $v$.
 For an operator $A\in\End(V)$ we denote $A_{ij}=e_i\cdot Ae_j$ the matrix coefficients of $A$.
We also denote $\widehat{A}$ the adjoint operator of $A$. We have $A\hat A=\hat A A=(\det A) I$.

Let $A: \ V\to V$ be a linear operator with eigenvalue $-b$. Let $v$ and $v^*$ be the corresponding eigenvectors of $A$ and $A^*$. We have $(A+b)v=0$, and $(A^*+b)v^*=0$.
 
\begin{lem}\label{linalg} Assume that $v_jv_k^*\neq 0$. Then
\be
\left(\frac{d}{d\la}\det (A+\la)\right)|_{\la=b}=\frac{v\cdot v^*}{v_jv_k^*} \widehat{(A+b)}_{jk}.
\en
\end{lem}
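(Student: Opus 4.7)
The plan is to combine Jacobi's formula for the derivative of a determinant with the rank-one structure of the adjugate of a matrix whose kernel is one-dimensional. The argument is a straightforward linear-algebra computation, and I do not expect any serious obstacle.

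First, I would apply Jacobi's formula to $B(\lambda) = A + \lambda$, for which $B'(\lambda) = I$. This gives
\[\frac{d}{d\lambda}\det(A+\lambda) \;=\; \tr \widehat{(A+\lambda)},\]
so setting $\lambda = b$ reduces the lemma to the identity
\[\tr \widehat{M} \;=\; \frac{v\cdot v^*}{v_j v_k^*}\,\widehat{M}_{jk}, \qquad M := A+b.\]

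Second, I would pin down the structure of $\widehat M$. Since $-b$ is an eigenvalue of $A$, we have $\det M = 0$, so the defining identities degenerate to $M\widehat M = \widehat M M = 0$. Under the tacit assumption that the eigenvalue $-b$ is simple (which is the only case where both sides are nonzero and the statement has content), $\ker M = \C\,v$ and $\ker M^* = \C\,v^*$, and $\widehat M$ has rank one. Writing $\widehat M = \alpha\,\beta^{\mathsf T}$, the identity $M\widehat M = 0$ forces $\alpha \in \ker M$, while $\widehat M M = 0$ forces $\beta \in \ker M^*$. Hence there is a scalar $c\in\C$ with
\[\widehat M_{ij} \;=\; c\, v_i v_j^*.\]

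Finally, I would read off both sides. The trace is $\tr \widehat M = c\sum_i v_i v_i^* = c\,(v\cdot v^*)$, while the $(j,k)$-entry is $\widehat M_{jk} = c\, v_j v_k^*$; the hypothesis $v_j v_k^* \neq 0$ lets me solve the latter for $c$ and substitute into the former, yielding the claimed identity. The only point requiring a moment of care, if one wants to dispense with the simplicity assumption, is the degenerate case where $-b$ is a multiple eigenvalue: there $\widehat M$ vanishes identically, so the right-hand side is zero by definition, while $\det(A+\lambda)$ has a zero of order at least two at $\lambda = b$, so its derivative also vanishes there and the identity holds trivially.
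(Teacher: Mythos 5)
Your proof is correct and follows essentially the same route as the paper's: Jacobi's formula reduces the left side to $\tr\,\widehat{(A+b)}$, and the rank-one structure $\widehat{(A+b)}_{sl}=\alpha v_s v_l^*$ (which the paper simply asserts and you justify via $M\widehat M=\widehat M M=0$) gives the identity. The only slight imprecision is in your closing aside: $\widehat M$ vanishes identically when the \emph{geometric} multiplicity of $-b$ is at least two, not merely the algebraic one (for a Jordan block $\widehat M\neq 0$), but in that case your main rank-one argument still applies verbatim, so nothing is lost.
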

\begin{proof} Since $-b$ is an eigenvalue, there exists $\al\in\C$ such that
$(\widehat{A+b})_{sl}=\al v_sv_l^*$ for all $s,l=0,1,\dots,n$. Thus
\be
\left(\frac{d}{d\la}\det(A+\la)\right)|_{\la=b}=\tr\ \widehat{A+b}=\al \sum_{s=0}^{n} v_sv_s^*=\frac{\widehat{(A+b)}_{jk}}{v_jv_k^*}\ v\cdot v^*.
\en
\end{proof}

We apply Lemma \ref{linalg} to the case $V=\C_{n}[x]$ 
the space of polynomials of degree at most $n$ and 
\bea\label{A}
A=(d/dx)^2-h'(x)(d/dx)+nx.
\ena
Clearly $A$ is a linear operator which preserves $V$. 
We choose the basis of $V$ as follows: let 
\be 
e_k(x)=x^k/k!, \qquad  k=0,\dots,n.
\en 
We set $e_{-1}(x)=e_{-2}(x)=0$. Then
\bea\label{A1}
Ae_k=e_{k-2}-ae_{k-1}+(n-k)(k+1)e_{k+1},\qquad k=0,\dots, n.
\ena
Clearly, there exists a wave polynomial $p(x)$ 
of operator \Ref{difur} if and only if $-b$ is an eigenvalue of 
$A$. Moreover, in such a case the rank of $D+b$ is $n$, $p(x)$ 
is unique up to a multiplicative constant and the degree of 
$p(x)$ is exactly $n$.

Let $p(x)=\sum_{s=0}^n p_se_s(x)$, $p_s\in\C$, be a wave polynomial: $(A+c)p(x)=0$. 
Then, clearly, $p^*(x)=\sum_{s=0}^np_{n-s}e_s(x)$ satisfies $(A^*+c)p^*(x)=0$.

Using Lemma \ref{linalg} with $j=n, k=0$, we have
\bea\label{der=pp}
\left(\frac{d}{d\la}\det (A+\la)\right)|_{\la=c}=(-1)^n\frac{(n!)^2}{p_n^2}\ \sum_{s=0}^n p_sp_{n-s}.
\ena

\section{The constant $c$}\label{c sec}
We are now ready to compute the constants $c$.
\begin{thm}\label{c} Let $p(x)=\sum_{s=0}^n p_se_s(x)$ be a monic wave polynomial of degree $n$. Then
for $k=0,\dots,n$ we have
\be
e_k(-x)p(-x)\sim \frac{(-1)^np_{n-k}}{p^2(x)}.
\en
\end{thm}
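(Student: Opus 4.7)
By Theorem \ref{d=0}, for each $k \in \{0,\dots,n\}$ there is a constant $c_k \in \C$ with $e_k(-x)p(-x) \sim c_k/p^2(x)$; by Proposition \ref{dim2} and Lemma \ref{pdim2}, $c_k$ is determined by the pairings with $l_1$ and $l_2$. Concretely, I want to verify
\begin{equation*}
\int_{\ga_j} e_k(-x)\, p(-x)\, e^{h(x)}\, dx \;=\; (-1)^n p_{n-k}\, J_j, \qquad j=1,2.
\end{equation*}
The plan is to read both sides as Taylor coefficients at $u=0$ of the two holomorphic presentations $g_j^{[1]}(u)$ and $g_j^{[2]}(u)$ of the same solution of the dual equation \Ref{dual}, and then invoke the identification $g_j^{[1]} = (-1)^n g_j^{[2]}$ from Proposition \ref{g=g}.

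First, expanding $e^{-ux}$ in $g_j^{[1]}(u) = \int_{\ga_j} p(-x) e^{h(x)-ux}\, dx$ and integrating term by term identifies $[u^k]\,g_j^{[1]}(u)$ with the left-hand side of the display above. Second, I use the rewriting \Ref{rewrite} of $g_j^{[2]}(u)$ and expand its $e^{-ux}$ factor; collecting powers of $u$ gives, for $0 \le k \le n$,
\begin{equation*}
[u^k]\, g_j^{[2]}(u) \;=\; \int_{\ga_j} \frac{e^{h(x)}}{p^2(x)}\, \biggl(\sum_{i=0}^k \frac{(-x)^i}{i!}\, p^{(n-k+i)}(x)\biggr) dx.
\end{equation*}
The bracketed sum is the Taylor expansion of $p^{(n-k)}(y)$ about $y=x$ evaluated at $y=0$; since $\deg p = n$, higher derivatives vanish so the expansion truncates exactly at $i=k$, and the sum collapses to $p^{(n-k)}(0) = p_{n-k}$ (this last equality is immediate from $p(x)=\sum_s p_s e_s(x)$ with $e_s(x)=x^s/s!$). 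Hence $[u^k]\,g_j^{[2]}(u) = p_{n-k} J_j$, and Proposition \ref{g=g} yields $[u^k]\,g_j^{[1]}(u) = (-1)^n p_{n-k} J_j$, giving $c_k = (-1)^n p_{n-k}$.

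The main technical hurdle, namely the identification $g_j^{[1]} = (-1)^n g_j^{[2]}$ via the steepest-descent asymptotics, is already handled by Proposition \ref{g=g}; what remains for the present theorem is essentially combinatorial bookkeeping. The subtle point I expect to double-check most carefully is the expansion leading to the inner sum $\sum_{i=0}^k \frac{(-x)^i}{i!} p^{(n-k+i)}(x)$ and its collapse to $p_{n-k}$, since this is where the restriction $k \le n$ enters (for $k > n$ the sum would be missing Taylor terms and the argument would break, consistent with the fact that one expects $c_k$-type identities only in this range).
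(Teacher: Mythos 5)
Your proof is correct, and it follows the same overall strategy as the paper: both arguments reduce the claim to computing Taylor coefficients at $u=0$ of the two presentations $g_j^{[1]}$ and $g_j^{[2]}$ of the same solution of the dual equation, with Proposition \ref{g=g} as the essential input. The one genuine difference is how the coefficient $p_{n-k}$ is extracted from $g_j^{[2]}$. The paper does not compute the Taylor coefficients of $g_j^{[2]}$ directly: it applies the inverse transform of Lemma \ref{Fourier-1} to $g_j$, uses uniqueness of the wave polynomial to conclude $\sum_s \frac{g_j^{(s)}(0)}{s!}e_{n-s}(x)=\alpha\, p(x)$, reads off $g_j^{(k)}(0)/k!=\alpha\, p_{n-k}$, and then pins down $\alpha$ by evaluating \Ref{rewrite} only at the single point $u=0$ (where just the $r=n$ term survives, giving $g_j^{[2]}(0)=n!\,J_j$). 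You instead expand \Ref{rewrite} fully in powers of $u$ and collapse the resulting inner sum $\sum_{i=0}^{k}\frac{(-x)^i}{i!}p^{(n-k+i)}(x)$ to $p^{(n-k)}(0)=p_{n-k}$ via the truncating Taylor expansion, obtaining every coefficient $[u^k]g_j^{[2]}=p_{n-k}J_j$ at once. Your route bypasses Lemma \ref{Fourier-1} and the uniqueness argument entirely at the cost of slightly more bookkeeping; the paper's route trades that bookkeeping for one more structural use of the bispectral duality. Your coefficient extraction, the range restriction $k\le n$, and the logic identifying $c_k$ through $l_1,l_2$ are all sound.
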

\begin{proof}
By Lemma \ref{pdim2}, there exists a constant $c_k$ such that 
$e_k(-x)p(-x)\sim c_k/p^2(x)$ and for $j=0,1,2$,
\be
J_jc_k=\int_{\ga_j} e_k(-x)p(-x) dx.
\en
Choose any $j\in\{0,1,2\}$ and
set $g_j(u)=g^{[1]}_j(u)=(-1)^ng^{[2]}_j(u)$, see Proposition \ref{g=g}.
Using the presentation $g_j(u)=g^{[1]}_j(u)$, we obtain
\be
J_j c_k= \int_{\ga_j} e_k(-x)p(-x)e^h \ dx=\frac{g_j^{(k)}(0)}{k!},
\en
where $g_j^{(k)}(u)$ denotes the $k$-th derivative of $g_j(u)$.

Expanding $g_j(u)$ in the Taylor series at $u=0$ and using Lemma \ref{Fourier-1} we compute
\be
\alpha p(x)=\Res_{u=0}\sum_{s=0}^{\infty}\frac{g_j^{(s)}(0)u^{s-n-1}}{s!}\  e^{ux}=
\sum_{s=0}^\infty\frac{g_j^{(s)}(0)}{s!}\ e_{n-s}(x),
\en
Since $p(x)$ is monic, we have $p_n=n!$ and the constant $\alpha$ 
is given by $g_j(0)/n!$.
It follows that
\be
c_k=\frac{g_j^{(k)}(0)}{k!J_j}=\frac{g_j(0)}{n!J_j} p_{n-k}.
\en
Finally, using that $g_j(u)=(-1)^ng_j^{[2]}(u)$ and equation 
\Ref{rewrite}, we obtain
\be 
g_j(0)=(-1)^n n!J_j.
\en
The theorem follows.
\end{proof}

\begin{cor}\label{c cor} Let $p(x)$ be a monic wave polynomial of degree $n$. Then
\be
p^2(-x)\sim \frac{c}{p^2(x)}, \qquad c=\left(\frac{d}{d\la}\det (A+\la)\right)|_{\la=b},
\en
where $A$ is given by \Ref{A} or \Ref{A1}.
\end{cor}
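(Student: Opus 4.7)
The plan is to deduce the corollary directly from Theorem \ref{c} by expanding $p(-x)$ in the basis $\{e_s(-x)\}$ and summing, then recognizing the resulting coefficient via the linear algebra identity \Ref{der=pp} from Section \ref{lin alg sec}.

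First, I would write $p(x)=\sum_{s=0}^n p_s e_s(x)$, so that $p(-x)=\sum_{s=0}^n p_s e_s(-x)$. Multiplying by $p(-x)$ gives
\[
p^2(-x)=\sum_{k=0}^n p_k\,e_k(-x)p(-x).
\]
Since the relation $\sim$ is $\C$-linear (both $C$ and $R$ are $\C$-vector subspaces of $\C(x)$), I can apply Theorem \ref{c} term-by-term. That theorem tells me $e_k(-x)p(-x)\sim (-1)^n p_{n-k}/p^2(x)$, so
\[
p^2(-x)\ \sim\ \frac{(-1)^n}{p^2(x)}\sum_{k=0}^n p_k\,p_{n-k}.
\]

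Next, I would compare the constant $(-1)^n\sum_{k=0}^n p_k p_{n-k}$ with the formula \Ref{der=pp}, which gives
\[
\left.\frac{d}{d\la}\det(A+\la)\right|_{\la=b}=(-1)^n\frac{(n!)^2}{p_n^2}\sum_{s=0}^n p_s p_{n-s}.
\]
Since $p(x)$ is monic of degree $n$ and $e_n(x)=x^n/n!$, the leading coefficient is $p_n/n!=1$, i.e.\ $p_n=n!$. Thus the prefactor $(n!)^2/p_n^2$ equals $1$, and the constant on the right-hand side of my expression for $p^2(-x)$ agrees with $c=\left.\frac{d}{d\la}\det(A+\la)\right|_{\la=b}$, which proves the corollary.

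There is no real obstacle: once Theorem \ref{c} is in hand, this is essentially an algebraic identity. The only thing to double-check is the normalization convention (monic polynomial versus top coefficient $p_n$ in the basis $e_s(x)=x^s/s!$), which forces $p_n=n!$ and makes the $(n!)^2/p_n^2$ factor disappear.
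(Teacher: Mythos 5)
Your proof is correct and follows exactly the paper's own argument: expand $p(-x)$ in the basis $e_s(-x)$, apply Theorem \ref{c} term by term using linearity of $\sim$, and identify the resulting constant with \Ref{der=pp} via the normalization $p_n=n!$. Nothing further is needed.
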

\begin{proof}
By Theorem \ref{c}, we have
\be
p^2(-x)=\left(\sum_{s=0}^n p_se_s(-x)\right)p(-x)\sim (-1)^n\frac{\sum_{s=0}^n p_sp_{n-s}}{p^2(x)}.
\en
The corollary now follows from \Ref{der=pp}.
\end{proof}

\begin{cor}
Conjecture 1 and formula (18) in \cite{EG} is true. $\qquad$ $\Box$
\end{cor}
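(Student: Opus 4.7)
The plan is to treat this final corollary as a translation statement: everything of substance has been proven in Theorem~\ref{c} and Corollary~\ref{c cor}, so what remains is to check that our language matches that of \cite{EG} and that their Conjecture~1 and formula~(18) are literally special cases of our formulas.

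First I would recall the setup of \cite{EG}: they study polynomial eigenfunctions $p$ of the same operator \Ref{difur}, and their Conjecture~1 asserts the existence of a constant $c$ (depending on the eigenvalue $b$ and on $a$) such that $p^2(-x)$ is cohomologous modulo $C$ to $c/p^2(x)$, while their formula~(18) gives the explicit value of $c$ as a derivative of the characteristic polynomial of the natural linear operator restricted to $\C_n[x]$ evaluated at the relevant eigenvalue. I would line up their normalization of $p$ (monic of degree $n$) with ours, and verify that their equivalence relation $\sim$ and their ambient space are exactly the quotient $R/C$ we use — this is where Proposition~\ref{dim2} and Lemma~\ref{pdim2} are invoked to justify that the assertion makes sense and that a unique such $c$ exists.

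Having made this identification, I would invoke Corollary~\ref{c cor}: for a monic wave polynomial $p(x)$ of degree $n$ one has
\[
p^2(-x)\sim \frac{c}{p^2(x)},\qquad c=\left.\frac{d}{d\la}\det(A+\la)\right|_{\la=b},
\]
with $A$ the operator \Ref{A} on $\C_n[x]$. The first half of this statement is precisely Conjecture~1 of \cite{EG}, and the second half is their formula~(18). A brief check should confirm that the matrix in \cite{EG} representing the relevant operator coincides, up to the orthonormal reindexing $e_k=x^k/k!$, with the matrix whose entries are given by \Ref{A1}; since the determinant and its $\la$-derivative are basis-independent, the two formulations of $c$ agree.

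The main (and essentially only) obstacle is this bookkeeping: ensuring that sign conventions, the placement of $-b$ versus $b$ as the eigenvalue, and the normalization of the monic wave polynomial in \cite{EG} are compatible with ours, so that their $c$ really is our $c$. Once that routine dictionary is in place, no further mathematics is required and the corollary follows at once from Corollary~\ref{c cor}. $\qquad\Box$
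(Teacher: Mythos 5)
Your overall strategy is the same as the paper's: the corollary is a translation statement, and one reduces it to the results already proved plus a dictionary between the two sets of conventions. (One small attribution point: the paper derives Conjecture~1 of \cite{EG} from Theorem~\ref{d=0} and formula~(18) from Corollary~\ref{c cor}; you derive both from Corollary~\ref{c cor}, which is defensible since that corollary contains both the relation $p^2(-x)\sim c/p^2(x)$ and the value of $c$, but Theorem~\ref{d=0} is the natural and more general source for the first claim.)

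The genuine gap is that the dictionary \emph{is} the entire content of the proof, and you neither supply it nor set it up correctly. You start from the premise that \cite{EG} studies ``polynomial eigenfunctions of the same operator \Ref{difur}'' and that their statements are ``literally special cases'' of ours, so that only routine sign bookkeeping remains. That premise is false: the normalization in \cite{EG} differs by a nontrivial rescaling of the independent variable and of both parameters. The paper's proof records it explicitly as $x=\beta z_{EG}$, $p(x)=p_{EG}(z)\beta^n$, $\beta a=2b_{EG}$, $\beta^2 b=2a_{EG}$ with $\beta^3=-2$; in particular the variable is scaled by a cube root of $-2$, the roles of the letters $a$ and $b$ are interchanged between the two papers, and one must check that the equivalence relation $\sim$ and the quantity $\frac{d}{d\la}\det(A+\la)\big|_{\la=b}$ transform consistently under this substitution (e.g.\ that the characteristic polynomial of $A$ matches the one in \cite{EG} after the rescaling, not merely after a change of basis of $\C_n[x]$). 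Without exhibiting this change of variables and verifying its effect on both sides of the asserted identities, the corollary is not actually proved; and as written, your assumption that no rescaling is needed would lead you to compare mismatched constants.
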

\begin{proof}
Conjecture 1 and formula (18) in \cite{EG} follow from Theorem \ref{d=0} and
Corollary \ref{c cor} respectively after the change of variables:
\bea\notag
x=\beta z_{EG},\qquad p(x)=p_{EG}(z)\beta^n, \qquad \beta a= 2b_{EG}, 
\qquad \beta^2 b=2a_{EG},
\ena
where $\beta^3=-2$ and we denote the objects from \cite{EG} 
by the same letters as there 

but with the index $EG$ to distinguish from the notation used in this note.
\end{proof}

\bigskip

\address{EM: {\it Department of Mathematical Sciences,
Indiana University\,--\,Purdue University,
Indianapolis, 402 North Blackford St, Indianapolis,
IN 46202-3216, USA}}

\email{mukhin@math.iupui.edu}

\medskip

\address{VT: {\it Department of Mathematical Sciences,
Indiana University\,--\,Purdue University,
Indianapolis, 402 North Blackford St, Indianapolis,
IN 46202-3216, USA, and St.\,Petersburg Branch of Steklov Mathematical
Institute Fontanka 27, St.\,Petersburg, 191023, Russia}}

\email{vt@math.iupui.edu}

\email{vt@pdmi.ras.ru}
\end{document}